\documentclass{article}

\usepackage{tikz}
\usepackage{amssymb}
\usepackage{amsmath}
\usepackage{amstext}
\usepackage{amsthm}
\usepackage{latexsym}
\usepackage{verbatim}
\usepackage[colorlinks=true,citecolor=blue,linkcolor=blue,urlcolor=blue]{hyperref}





\newcommand{\inq}{\textsf{INQ}}

\newcommand{\inqp}{\textsf{INQ}^+}
\newcommand{\inqm}{\textsf{INQ}^-}











\renewcommand{\phi}{\varphi}

\newcommand{\PDL}{\operatorname{PDL}}


\newcommand{\gap}{[\phantom{a}]}

\newcommand\D{\textsf{D}}

\newcommand{\w}{\overline{w}}

\renewcommand{\L}{\mathcal{L}}

\def\disji{\rotatebox[origin=c]{-90}{$\geqslant$}}
\newcommand{\lori}{\,\disji\,}

\newcommand{\dep}[2]{=\hspace{-3pt}({#1};{#2})}
\newcommand{\con}[1]{=\hspace{-3pt}({#1})}

\theoremstyle{definition}
\newtheorem{theorem}{Theorem}[section]

\newtheorem{lemma}[theorem]{Lemma}

\title{Two strong undefinability results in inquisitive and team semantics}
\author{Fausto Barbero \\ (University of Helsinki)}

\date{}

\begin{document}
\maketitle

\begin{abstract}
We prove two (strong) undefinability results for logics based on inquisitive semantics (or its variant, team semantics). Namely: 1) we show the undefinability of intuitionistic implication in extended propositional inquisitive logic with tensor; 2) we show the undefinability of global disjunction in extended propositional dependence logic.
\end{abstract}

The purpose of the present note is to make publicly available two undefinability results that were obtained during the final stages of preparation of the paper \cite{CiaBar2019} and could not be included in the final manuscript. The results concern \emph{extended propositional inquisitive logic with tensor} (the extension of propositional inquisitive logic with the tensor disjunction $\otimes$ and with the definable operators $\top,\neg$ and $?$) and \emph{extended propositional dependence logic} (whose syntax includes atoms expressing dependencies between arbitrary classical formulas).

Propositional inquisitive semantics evaluates formulas over \emph{sets} of possible worlds and propositional team semantics does it over \emph{sets} of finite Boolean valuations.\footnote{Thus, the models considered in propositional team semantics are a special case of inquisitive models. However, for all the languages considered in this note, the two semantics frameworks produce equivalent logics.} Evaluating formulas over sets of worlds or valuations leads to a more complex theory of the (un)definability of connectives than what is attained with the usual notion of satisfaction at a single world (resp., a single assignment). It is well-known from the literature that the following two concepts do \emph{not} coincide in inquisitive/team semantics (while they do in classical logic).
\begin{itemize}
\item A binary connective $\circ$ is \emph{expressible}\footnote{In the literature on team semantics the word ``definable'' is often used with this meaning.} in a language $\L$ if, for every $\psi,\chi\in\L$, there is a $\varphi\in\L$ that is equivalent to $\psi \circ \chi$. 
\item A binary connective $\circ$ is \emph{definable}\footnote{In the literature on team semantics the word ``uniformly definable'' is often used with this meaning.} in a language $\L$ if there is a template formula $\varphi(\phantom{a},\phantom{a})$ such that, for every $\psi,\chi\in\L$, $\psi \circ \chi$ is equivalent to $\varphi(\psi,\chi)$.\footnote{The template formula $\varphi(\phantom{a},\phantom{a})$ can be seen as a formula containing some occurrences of ``gap symbols'' $\gap_L$ and $\gap_R$. In $\varphi(\psi,\chi)$, each occurrence of $\gap_L$ is replaced with $\psi$ and each occurrence of $\gap_R$ is replaced with $\chi$.}
\end{itemize} 
While (in)expressibility results typically follow as corollaries of characterizations of the expressive power of languages, an undefinability proof typically requires more sophisticate, and often \emph{ad hoc}, techniques (see for example \cite{Gal2013}, \cite{Yan2017} for special results, and \cite{Cia2016}, \cite{HelLuoSanVir2014}, \cite{Ron2018}, \cite{LucVil2019},  \cite{HelLuoVaa2024} for more systematic approaches). We can make more explicit the meaning of \emph{un}definability of a binary connective $\circ$ as follows:
\begin{quote}
For every template formula $\varphi(\phantom{a},\phantom{a})$, there are formulas $\psi_\varphi,\chi_\varphi$ such that $\varphi(\psi_\varphi,\chi_\varphi)\not\equiv  \psi_\varphi\circ\chi_\varphi$.
\end{quote} 
However, the results in \cite{CiaBar2019} and in the present note establish something more. These results have the following form:
\begin{quote}
There are $\psi,\chi$ and a model $M$ such that, for every template formula $\varphi(\phantom{a},\phantom{a})$,   $\varphi(\psi',\chi')\not\equiv_{M'} \psi'\circ\chi'$, where $\psi',\chi'$ differ from $\psi,\chi$ only by renaming of atoms and $M'$ is isomorphic to $M$.
\end{quote}
In other words, there is a single model (up to isomorphism) where all putative definitions of $\circ$ fail (and this failure is essentially witnessed by the same substitution instance in each case).\footnote{In all the results obtained so far, a small finite model sufficed.}
We will refer to a result of this kind as a proof of \emph{strong undefinability}. In this note we establish two results of this kind.

\begin{enumerate}
\item The paper \cite{CiaBar2019} proved the independence of the connectives of propositional inquisitive logic plus the tensor operator. Here we consider a further extension of the logic with the operators  $\top,\neg$ and $?$ that are commonly used in the literature on inquisitive logics. While these operators are definable in inquisitive logic (plus tensor), it is not trivial to see whether the other connectives remain independent in this context. The arguments from \cite{CiaBar2019} can be reused to prove the independence of $\land$ and $\otimes$, but the independence of $\rightarrow$ (inquisitive implication) requires a new argument (section \ref{sec: undefinability of implication}).

\item In \cite{Yan2017} it is proved, via a complex lemma, that the global disjunction $\lori$ (typical of inquisitive logic) is not definable in propositional dependence logic. The result concerns propositional dependence logic in its most common presentation, which allows for dependence atoms of the form $\dep{p_1,\dots,p_n}{q}$, which say that the truth value of a propositional letter $q$ is functionally determined by the truth values of $p_1,\dots,p_n$. The argument used in \cite{Yan2017} does not work if the logic is extended with atoms of the form $\dep{\alpha_1,\dots,\alpha_n}{\beta}$\footnote{Fan Yang, personal communication.} (dependence among truth values of classical formulas) and, as far as we know, it is still an open problem whether $\lori$ is definable in this context. We prove that it is not (section \ref{sec: undefinability of global disjunction}). Our argument also provides a new and much simpler proof for the case of propositional dependence logic.
\end{enumerate}

\noindent Sections \ref{sec: syntax} and \ref{sec: semantics} present the syntax and the semantics of the languages considered in this note.

\section{Syntax}\label{sec: syntax}

We will consider 0-ary connectives $\bot,\top$, unary connectives $\neg$ and $?$, and binary connectives $\land,\lori,\rightarrow$ and $\otimes$.

Given a list of connectives $\circ_1, \dots, \circ_n$ we will denote as $\L[\circ_1, \dots, \circ_n]$ the language having as atomic formulas a countable set of propositional letters $p,q,\dots$ and closed under applications of the connectives (according to their arities). We will consider the following inquisitive languages:

\begin{itemize}
\item $\inq$ (propositional inquisitive logic): $\L[\land,\lori,\rightarrow,\bot]$
\item $\inq^\otimes$ (propositional inquisitive logic with tensor): $\L[\land,\lori,\otimes,\rightarrow,\bot]$
\item $\inqp$ (extended $\inq^\otimes$): $\L[\land,\lori,\otimes,\to, \bot,\top,\neg,?]$
\item  $\inqm$ ($\inqp$ without $\rightarrow$): $\L[\land,\lori,\otimes,\bot,\top,\neg,?]$.
\end{itemize}

The family of logics of dependence follows more unfamiliar syntax rules, and thus we define the languages of interest in the more explicit BNF form. We consider the following:

\begin{itemize}
\item $\D$ (propositional dependence logic)
\[
\varphi::= p \ | \ \neg p \ | \ \bot \ | \ \dep{p_1,\dots,p_n}{q} \ | \ \varphi \land \varphi \ | \ \varphi \otimes \varphi
\]  
where, $p,q,p_1,\dots,p_n$ are propositional letters. Notice that $\neg$ is allowed to occur only in front of propositional letters. We will often write   $\dep{\vec p}{q}$ for the \emph{dependence atom} $\dep{p_1,\dots,p_n}{q}$.  If $\vec p$ is the empty list, we simply write $\con{q}$ and call it a \emph{constancy atom}.
  
\item $\D^+$ (extended propositional dependence logic)
\[
\varphi::= p \ | \ \neg p \ | \ \bot \ | \ \dep{\alpha_1,\dots,\alpha_n}{\beta} \ | \ \varphi \land \varphi \ | \ \varphi \otimes \varphi
\]
where $\alpha_1,\dots,\alpha_n,\beta$ are \emph{classical formulas}, i.e. $\D$ formulas without dependence atoms. The formula $\dep{\alpha_1,\dots,\alpha_n}{\beta}$ will be called a \emph{generalized dependence atom}.
\end{itemize} 

\noindent The syntax of $\D$ and $\D^+$ does not allow us to write the formula $\neg\bot$; in the context of these languages, we will write $\top$ as an abbreviation for $p\otimes\neg p$.

We will also use the following abbreviation: if $\vec\psi = (\psi_1,\dots,\psi_n)$ is a tuple of formulas, then $\bigwedge \psi$ will denote their conjunction.\footnote{The operator $\land$, when given its semantics, will be associative; it is then irrelevant to specify how, exactly, parentheses are used in $\bigwedge \psi$.}

\section{Semantics}\label{sec: semantics}

We will work with inquisitive semantics, although in principle all the results in this note could be expressed and proved using team semantics. The models of inquisitive semantics are pairs $M=(W, V)$, where $W$ is a set (of ``possible worlds'') and $V$ is a valuation function that assigns to each world $w\in W$ a set of propositional letters (the set of letters ``true at $w$''). Each formula is evaluated in a model $M=(W, V)$ at an \emph{information state} (or simply \emph{state}) $s\subseteq W$. We say that a formula $\varphi$ is supported by state $s$ in model $M$ ($M,s\models \varphi$) if $M,s$ and $\varphi$ are in the relation defined by the inductive clauses below. For the purposes of this paper, we can and will usually omit reference to the model $M$, writing just $s\models \varphi$. 

\begin{itemize}
\item $s\models p\iff p \in V(w)$ for all $w\in s$
\item $s\models \bot\iff s=\emptyset$
\item $s\models \psi\land\chi\iff s\models\psi$ and $s\models\chi$
\item $s\models \psi\lori\chi\iff s\models\psi$ or $s\models\chi$
\item $s\models \psi\to\chi\iff \forall t\subseteq s: t\models\psi$ implies $t\models\chi$
\item $s\models\psi\otimes\chi\iff\exists t_1,t_2\text{ s.t.\ } t_1\models\psi, t_2\models\chi\text{ and }s=t_1\cup t_2$.
\item $s\models \top$ unconditionally
\item $s\models \neg \psi \iff s\models \psi \to \bot  \iff t\not\models \psi$ for all $t\subseteq s$.
\item $s\models ?\psi \iff s\models \psi \lori \neg\psi  \iff s\models \psi$ or $s\models\neg\psi$.

\item $s\models \dep{\vec p}{q} \iff$ for all $w,w'\in s$, $w(\vec p)=w'(\vec p)$ implies $w(q)=w'(q)$. 
\item $s\models \dep{\vec \alpha}{\beta}$ iff for all $w,w'\in s$, if $\{w\}\models \bigwedge\vec\alpha \iff \{w'\}\models \bigwedge\vec\alpha$, then $\{w\}\models \beta\iff\{w'\}\models \beta$.
\end{itemize}
Notice in particular that, for a propositional letter $p$, $s\models \neg p$ iff $p\notin V(w)$ for each $w\in s$; and that a constancy atom $\con \beta$ is just an alternative notation for $?\beta$.

We say that two formulas $\psi,\chi$ are \emph{equivalent in $M=(W,V)$} (and we write $\psi\equiv_M\chi$) if $M,s\models \psi$ iff $M,s\models \chi$ for all $s\subseteq W$. We say that  $\psi,\chi$ are \emph{equivalent} ($\psi\equiv\chi$) iff $\psi\equiv_M\chi$ for all models $M$. 

All the languages considered in this note have the property of \emph{downwards closure}, that is: for any formula $\varphi$, if $s\models \varphi$ and $t\subseteq s$, then $t\models \varphi$. This can be shown by straightforward inductive arguments. Furthermore, the languages have the \emph{empty set property}: the empty state $\emptyset$ supports all formulas.

For any formula $\varphi$ and model $M=(W,V)$, we write $[\varphi]_M := \{s\subseteq W \mid s\models \varphi \}$; this is the \emph{inquisitive proposition} of $\varphi$. Notice that $\psi\equiv_M\chi$ iff $[\psi]_M=[\chi]_M$.
We will omit the subscript $M$ if the model is clear from the context or irrelevant.   We remark that:

\begin{itemize}
\item $[\psi \land \chi] = [\psi]\cap[\chi]$.
\item $[\psi \lor \chi] = [\psi]\cup[\chi]$.
\item $[\psi \otimes \chi] = \{s \cup t \mid s,t \subseteq W \text{ and } s\in [\psi], t\in [\chi]\}$.
\end{itemize}

\noindent We will also write $[\psi]\otimes[\chi]$ for this latter set.

\section{Independence of inquisitive implication in $\inqp$}\label{sec: undefinability of implication}

In \cite{CiaBar2019} it was shown that in $\inq^\otimes$  the connectives $\land$ and $\otimes$ are independent, or more precisely strongly undefinable from the other connectives in the language. The exact same arguments show that these connectives are independent in $\inqp$.

On the contrary, the argument given in \cite{CiaBar2019} to show the independence of $\to$ in $\inq^\otimes$ used a specific characteristic of language $\inq^\otimes$: the fact that all of its connectives, except for $\rightarrow$, are increasing monotone in both arguments. This kind of argument is not suitable anymore for $\inqp$, where also the operators $\neg$ and $?$ lack this property. We provide instead a direct argument, showing that $\rightarrow$ is strongly undefinable in $\inqm$.




\vspace{10pt}

For each pair of (distinct) atoms $p,q$, we define a corresponding possible world model $M_{pq} = (W,V_{pq})$, where:
\begin{itemize}
\item $W = \{w_1,w_2,w_3\}$
\item $V_{pq}(w_1) = \{p\}$ 
\item $V_{pq}(w_2) = \{q\}$
\item $V_{pq}(w_3) = \emptyset$.  
\end{itemize}
We will show that no $\inqm$ formula of the form $\varphi(a,b)$ without occurrences of $p,q$ is such that $\varphi(?p,?q)\equiv_{M_{pq}} ?p\to ?q$.

\begin{lemma}\label{LEMWI}
Let $\varphi(a,b)$ be an $\inqm$ formula without occurrences of $p,q$. If $\varphi(?p,?q)\not\equiv_{M_{pq}}\bot$,  then $\{w_i\}\models\varphi(?p,?q)$ for each $i=1,2,3$.
\end{lemma}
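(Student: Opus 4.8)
The plan is to establish, by structural induction on $\varphi$, the following strengthening of the statement: for every $\inqm$ formula $\psi(a,b)$ without occurrences of $p,q$, the set
\[
T(\psi) \;:=\; \{\, w\in W \mid \{w\}\models \psi(?p,?q)\,\}
\]
is either empty or all of $W$. This immediately yields the lemma: by downwards closure, every nonempty state supporting a formula contains a singleton supporting it, so if $\varphi(?p,?q)\not\equiv_{M_{pq}}\bot$ then $T(\varphi)\neq\emptyset$, hence $T(\varphi)=W$, i.e.\ $\{w_i\}\models\varphi(?p,?q)$ for $i=1,2,3$.

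The induction hinges on how the connectives of $\inqm$ act on singleton states, and here the empty set property is essential. For any formula $\theta$, the only subsets of $\{w\}$ are $\emptyset$ and $\{w\}$, and $\emptyset$ supports everything; reading off the clause for $\neg$ (i.e.\ for $\theta\to\bot$) this gives $\{w\}\models\neg\theta$ iff $\{w\}\not\models\theta$, and therefore $\{w\}\models{?\theta}$ unconditionally. In particular $T(a)=T(?p)=W$ and $T(b)=T(?q)=W$. Likewise, a splitting $\{w\}=t_1\cup t_2$ forces $t_1=\{w\}$ or $t_2=\{w\}$, with $\emptyset$ taking care of the remaining component, so $\otimes$ collapses to $\lori$ on singletons: $\{w\}\models\psi\otimes\chi$ iff $\{w\}\models\psi$ or $\{w\}\models\chi$. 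Finally, in the particular model $M_{pq}$ every propositional letter $r\notin\{p,q\}$ is false at every world, so $T(r)=\emptyset$; also $T(\bot)=\emptyset$ and $T(\top)=W$.

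Given these observations the induction is mechanical. Base cases: the gaps and $\top$ give $T=W$, while $\bot$ and the letters $r\neq p,q$ give $T=\emptyset$. Inductive cases: directly from the support clauses (and the collapse of $\otimes$ to $\lori$), one gets $T(\psi\land\chi)=T(\psi)\cap T(\chi)$, $\ T(\psi\lori\chi)=T(\psi\otimes\chi)=T(\psi)\cup T(\chi)$, $\ T(\neg\psi)=W\setminus T(\psi)$, and $T({?\psi})=W$ regardless of $T(\psi)$. Since $\{\emptyset,W\}$ is closed under intersection, union and complementation, and since the $?$ case always lands on $W$, each case keeps $T$ inside $\{\emptyset,W\}$, and the induction closes.

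I do not anticipate a genuine obstacle here. The only points needing care are the clauses for $\neg$ and $?$, which really do rely on the empty set property; this is exactly the non-monotonicity of $\neg$ and $?$ that blocks the argument of \cite{CiaBar2019}, but the singleton analysis sidesteps monotonicity entirely. The other small ingredient is the remark that $\otimes$ and $\lori$ agree on singleton states; everything else is bookkeeping.
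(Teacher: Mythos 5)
Your proof is correct and follows essentially the same route as the paper's: a structural induction showing that the set of worlds whose singletons support $\varphi(?p,?q)$ is either empty or all of $W$, which by downward closure and the empty set property is exactly the content of the lemma. The only cosmetic differences are that you treat $?$ directly (via $T({?\psi})=W$) rather than first eliminating it in favour of $\lori$ and $\neg$, and that you package the paper's case-by-case analysis as closure of $\{\emptyset,W\}$ under the Boolean operations that the connectives induce on singleton states.
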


\begin{proof}
 We first notice that, without loss of generality, we can assume the symbol $?$ not to occur in $\varphi(a,b)$ (just replace each subformula of the form $?\theta$ with $\theta\lori\neg\theta$).
We then proceed by induction on the syntax of $\varphi(a,b)$. Write $\varphi^*$ for $\varphi(?p,?q)$ and similarly for other formulas.

Base cases: the cases for $\varphi(a,b)$ being $\bot$ or $\top$ are straightforward. In case  $\varphi(a,b)$ is $a$, then $\varphi^*$ is $?p$, and each of the $\{w_i\}$ support $?p$. The case for $\varphi(a,b)$ being $b$ is analogous. Finally, suppose $\varphi(a,b)$ is an atom $r$ distinct from $a,b$ (and, by assumption, distinct from $p$ and $q$). Since $r$ is not true at any world in $M_{pq}$, by downward closure we have $\varphi^*\equiv_{M_{pq}}\bot$.

Inductive step: we have several cases.

\begin{itemize}
\item $\varphi(a,b)$ is of the form $\theta(a,b)\land\eta(a,b)$. If either $\theta^*$ or $\eta^*$ is $\equiv_{M_{pq}}\bot$, then also $\varphi^*$ is. Otherwise, by the inductive hypothesis $\{w_i\}\in [\theta^*]\cap[\eta^*]=[\theta^* \land\eta^*] = [\varphi^*]$ for each $i=1,2,3$. 

\item $\varphi(a,b)$ is of the form $\theta(a,b)\lori\eta(a,b)$. If $\theta^*\equiv_{M_{pq}}\bot$, then $\varphi^*\equiv_{M_{pq}}\eta^*$, so $\varphi^*$ satisfies the statement by inductive hypothesis. 
Otherwise, by the inductive assumption $\{w_i\}\in [\theta^*]$; therefore,  $\{w_i\}\in [\theta^*]\cup [\eta^*] = [\theta^*\lori \eta^*]= [\varphi^*]$.
 
\item $\varphi(a,b)$ is of the form $\theta(a,b)\otimes\eta(a,b)$. 
If both $\theta^*$ and $\eta^*$ are $\equiv_{M_{pq}}\bot$, then  $\varphi^*$ is. Otherwise, wlog assume that $\theta^*\not\equiv_{M_{pq}}\bot$. Then, by the inductive assumption on $\theta^*$, we have $\{w_i\}\models \theta^*\models \theta^*\otimes\eta^*$. 

\item $\varphi(a,b)$ is of the form $\neg\theta(a,b)$. If  $\theta^*\equiv_{M_{pq}}\bot$, then $\varphi^*\equiv \top$, so the  $\{w_i\}$ satisfy this formula. Otherwise, by the inductive assumption each $\{w_i\}$ satisfies $\theta^*$; but then by downward closure $\varphi^*\equiv_{M_{pq}}\bot$.  
\end{itemize}
\end{proof}

\noindent Given any $S\in \wp(\wp(W))$, we write $S^{\downarrow} := \{t\subseteq s  \mid s\in S \}$ (\emph{downward closure} of $S$). It can be verified that in $M_{pq}$ we have:

\begin{itemize}
\item $[?p\lori ?q] = \{\{w_1,w_3\},\{w_2,w_3\}\}^\downarrow$.
\item  $A:= [?p\rightarrow ?q] = \{\{w_1,w_2\},\{w_1,w_3\}\}^\downarrow$.
\item $B:= [?q\rightarrow ?p] = \{\{w_1,w_2\},\{w_2,w_3\}\}^\downarrow$.
\item $C:= [?p\rightarrow ?q]\cap [?q\rightarrow ?p] = \{\{w_1,w_2\}, \{w_3\}\}^\downarrow$.  
\end{itemize}

\noindent We want to show that no $\inqm$ formula of the form $\varphi(?p,?q)$ can be equivalent to $?p \to ?q$ in $M_{pq}$ (in case $\varphi(a,b)$ has no occurrences of $p,q$). In order to have a stronger inductive assumption, we prove more generally that the inquisitive proposition of $\varphi(?p,?q)$ cannot be $A,B$ or $C$. 

\begin{lemma}\label{LEMABC}
Let $\varphi(a,b)$ be an $\inqm$ formula without occurrences of $p,q$. Then $[\varphi(?p,?q)] \neq A,B,C$.
\end{lemma}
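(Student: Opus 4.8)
The plan is to prove the statement by induction on the structure of $\varphi(a,b)$, using Lemma~\ref{LEMWI} to control the inquisitive propositions of the immediate subformulas. (The statement as given is already in a ``self‑strengthening'' form: excluding only $A$ would not yield a usable induction hypothesis, whereas excluding $A$, $B$ and $C$ simultaneously does.) As in the proof of Lemma~\ref{LEMWI}, I may assume that $?$ does not occur in $\varphi$, so only the connectives $\bot,\top,\neg,\land,\lori,\otimes$ arise; write $\varphi^*$ for $\varphi(?p,?q)$, and recall that $[\varphi^*]$ is always downward closed and contains $\emptyset$. The base cases are immediate: $[\bot^*]=\{\emptyset\}$, $[\top^*]=\wp(W)$, $[a^*]=[?p]$, $[b^*]=[?q]$, and $[r^*]=\{\emptyset\}$ for any propositional letter $r\neq p,q$ (no world of $M_{pq}$ satisfies $r$); since each of $A,B,C$ contains the state $\{w_1,w_2\}$ and omits the state $\{w_1,w_2,w_3\}$, it equals none of these five families.

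For the inductive step I would treat the connectives one at a time. \emph{Conjunction:} if $[\theta^*]\cap[\eta^*]$ were one of $A,B,C$, then $[\theta^*]$ and $[\eta^*]$ are both downward‑closed supersets of it, and a short check shows that, apart from $A,B,C$ themselves, the only downward‑closed supersets of $A$, of $B$ or of $C$ are the family $F:=\{\,s\subseteq W\mid s\neq W\,\}$ and $\wp(W)$; since the induction hypothesis excludes $A,B,C$ for $[\theta^*]$ and $[\eta^*]$, both lie in $\{F,\wp(W)\}$, but then their intersection does too, and is not among $A,B,C$. \emph{Disjunction:} this is where Lemma~\ref{LEMWI} is needed. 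If $[\theta^*]\cup[\eta^*]$ were one of $A,B,C$, then $[\theta^*],[\eta^*]$ are downward‑closed subfamilies of it, and by Lemma~\ref{LEMWI} each is either $\{\emptyset\}$ — which forces the other to equal the target, against the induction hypothesis — or contains all three singletons. But a downward‑closed subfamily of $A$, $B$ or $C$ that contains all three singletons and is not itself $A$, $B$ or $C$ necessarily omits the state $\{w_1,w_2\}$; hence so does $[\theta^*]\cup[\eta^*]$, contradicting $\{w_1,w_2\}\in A\cap B\cap C$.

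\emph{Tensor:} since $S\otimes\{\emptyset\}=S$, if $[\theta^*]\otimes[\eta^*]$ were one of $A,B,C$ then neither factor is $\{\emptyset\}$ (else the other equals the target), so by Lemma~\ref{LEMWI} both factors contain all three singletons; then $[\theta^*]\otimes[\eta^*]$ contains $\{w_i\}\cup\{w_j\}$ for all $i,j$, i.e.\ every subset of $W$ of size at most two, whereas each of $A,B,C$ omits at least one two‑element subset — a contradiction. \emph{Negation:} since $s\models\neg\theta^*$ iff no nonempty substate of $s$ supports $\theta^*$, which by downward closure happens iff no singleton substate does, the family $[\neg\theta^*]$ depends only on the set of worlds $w$ with $\{w\}\models\theta^*$; by Lemma~\ref{LEMWI} that set is either empty, giving $[\neg\theta^*]=\wp(W)$, or all of $W$, giving $[\neg\theta^*]=\{\emptyset\}$ — and neither of these is $A$, $B$ or $C$. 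This closes the induction, and taking $\varphi$ arbitrary gives the lemma.

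The main obstacle is the disjunction case: it is the only step in which the subformula propositions can be genuinely small — proper subfamilies of $A$, $B$, $C$ — so that Lemma~\ref{LEMWI} does not localize them on its own, and the argument rests on the little combinatorial observation that a downward‑closed subfamily of $A$, $B$ or $C$ containing all three singletons is either one of $A,B,C$ or misses $\{w_1,w_2\}$ (including $\{w_1,w_2\}$ forces it to contain $C$, hence to be one of $A,B,C$). The conjunction, tensor and negation cases are routine once Lemma~\ref{LEMWI} is available, reducing respectively to a two‑element list of supersets, to producing one offending state, and to a direct computation of $[\neg\theta^*]$.
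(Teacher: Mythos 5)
Your proof is correct and takes essentially the same route as the paper's: a structural induction over $M_{pq}$ in which Lemma~\ref{LEMWI} drives the $\lori$, $\otimes$ and $\neg$ cases, while the $\land$ case rests on enumerating the downward-closed supersets of $A$, $B$, $C$. The only differences are presentational, e.g.\ your single combinatorial observation in the disjunction case (a downward-closed subfamily containing all singletons and $\{w_1,w_2\}$ must be $A$, $B$ or $C$) replaces the paper's three explicit subcases.
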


\begin{proof}
By induction on the syntax of $\varphi(a,b)$. As before, we can assume that this formula contains no occurrences of $?$, and we write $\varphi^*$ for $\varphi(?p,?q)$. 

The base cases are easy.

Inductive step: we have several cases.

\begin{itemize}
\item $\varphi(a,b)$ is of the form $\theta(a,b)\land\eta(a,b)$. 
\begin{itemize}
\item Observe that there is only one proper subfamily of $\wp(W)$ which extends $A$ (namely, $\wp(W)\setminus\{W\} = \{\{w_1,w_2\},\{w_2,w_3\},\{\w_1,w_3\}^\downarrow$). Therefore, $A$ cannot be obtained as intersection of two subsets of $\wp(W)$ which are both distinct from $A$. Since, by the  inductive hypothesis, we also have that $[\theta^*]\neq A \neq [\eta^*]$, we can then conclude that $[\varphi^*] = [\theta^*]\cap  [\eta^*] \neq A$. By a similar argument we conclude $[\varphi^*] \neq B$.
\item The fact that $[\varphi^*] \neq C$ is proved similarly, observing first that the only way of writing $C$ as an intersection of subsets of $\wp(W)$ which strictly include $C$ is $A\cap B$.
\end{itemize}

\item $\varphi(a,b)$ is of the form $\theta(a,b)\lori\eta(a,b)$.
\begin{itemize}
\item If $\theta^*\equiv_{M_{pq}}\bot$, then $\varphi^*\equiv_{M_{pq}}\eta^*$; and $[\eta^*]\neq A,B,C$ by the inductive hypothesis. In case instead $\eta^*\equiv_{M_{pq}}\bot$, we have a similar proof.
\item Suppose neither $\theta^*$ nor $\eta^*$ is $\equiv_{M_{pq}}\bot$. We need to check a few subcases.
\begin{itemize}
\item Suppose for the sake of contradiction that $[\varphi^*] =C$. Then, either $[\theta^*]$ or $[\eta^*]$ contains $\{w_1,w_2\}$; say, $\{w_1,w_2\}\in [\theta^*]$. By lemma \ref{LEMWI} we have $\{w_3\}\in [\theta^*]$. So, $C\subseteq [\theta^*]$. Since furthermore $[\theta^*]\subseteq [\varphi^*]=C$, we must conclude that $[\theta^*]=C$, which contradicts the inductive hypothesis.
\item Suppose for the sake of contradiction that $[\varphi^*] =A$. As before, we have, say, that  $\{w_1,w_2\}\in [\theta^*]$. By lemma \ref{LEMWI} we have $\{w_3\}\in [\theta^*]$. Since furthermore $[\theta^*]\subseteq [\varphi^*]=A$,  we must conclude that $[\theta^*]$ is either $C$ or $A$, which contradicts the inductive hypothesis.
\item The case for $[\varphi^*] =B$ is similar.
\end{itemize}
\end{itemize}

\item $\varphi(a,b)$ is of the form $\theta(a,b)\otimes\eta(a,b)$. The subcase in which either $\theta^*\equiv_{M_{pq}}\bot$ or $\eta^*\equiv_{M_{pq}}\bot$ is treated as in the $\lori$ case. If instead neither of these holds, by lemma \ref{LEMWI} both $\theta^*$ and $\eta^*$ are satisfied by each of the $\{w_i\}$. But then $\{w_i,w_j\}\in [\theta^*\otimes\eta^*]$ for each $i,j = 1,2,3$ with $i\neq j$; so $[\theta^*\otimes\eta^*]\neq A,B,C$.

\item $\varphi(a,b)$ is of the form $\neg\theta(a,b)$.
If $\theta^* \equiv_{M_{pq}}\bot$, then $\varphi^*\equiv_{M_{pq}}\top$; and $[\top]\neq A,B,C$. Otherwise, by lemma \ref{LEMWI}, $\{w_i\}\models \theta^*$ for each $i=1,2,3$. But then, by downward closure,  $[\varphi^*]= \{\emptyset\}\neq A,B,C$.
\end{itemize}
\end{proof}

\begin{theorem}\label{thm: undefinability of implication}
The connective $\to$ is not definable in $\inqm$.
\end{theorem}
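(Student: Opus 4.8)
The plan is to derive Theorem~\ref{thm: undefinability of implication} as an immediate consequence of Lemma~\ref{LEMABC}, together with the computations of $[?p\to ?q]_{M_{pq}}$ recorded just before that lemma. Recall that ``definable'' means: there is a single template formula $\varphi(\gap,\gap)$ such that $\psi\circ\chi\equiv\varphi(\psi,\chi)$ for \emph{all} $\psi,\chi\in\inqm$. So to refute definability of $\to$ it suffices to exhibit one instance that fails in one model. I would take $\psi:=\,?p$ and $\chi:=\,?q$ for a fixed pair of distinct propositional letters $p,q$, and the model $M_{pq}$.

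First I would observe that any candidate template $\varphi(\gap,\gap)$ may, without loss of generality, be assumed to contain no occurrences of $p$ or $q$: since we only need to falsify the equivalence on the single instance $\varphi(?p,?q)$, we may rename any stray occurrences of $p,q$ inside the template to fresh letters $p',q'$ not appearing elsewhere, obtaining a template $\varphi'$ with $\varphi'(?p,?q)\equiv_{M_{pq}}\varphi(?p,?q)$ (the worlds of $M_{pq}$ do not distinguish $p'$ from $q'$ or from any other fresh letter, all being false there, so the value of $?p'$ and $?q'$ at every state of $M_{pq}$ is the same as that of $?r$ for the fresh letter $r$ used in Lemma~\ref{LEMABC}'s base case). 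Then $\varphi'(a,b)$ is an $\inqm$ formula without occurrences of $p,q$ in the sense required by Lemma~\ref{LEMABC}.

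Next I would apply Lemma~\ref{LEMABC} to $\varphi'(a,b)$: it gives $[\varphi'(?p,?q)]_{M_{pq}}\neq A$, where $A=[?p\to ?q]_{M_{pq}}=\{\{w_1,w_2\},\{w_1,w_3\}\}^\downarrow$ as computed above. Hence $[\varphi(?p,?q)]_{M_{pq}}=[\varphi'(?p,?q)]_{M_{pq}}\neq [?p\to ?q]_{M_{pq}}$, so $\varphi(?p,?q)\not\equiv_{M_{pq}}\,?p\to ?q$, and a fortiori $\varphi(?p,?q)\not\equiv\,?p\to ?q$. Since $\varphi$ was an arbitrary template, $\to$ is not definable in $\inqm$. (The stronger $A,B,C$ formulation in Lemma~\ref{LEMABC} was only needed to run that lemma's induction through the $\lori$ and $\neg$ cases; for the theorem itself only the clause $[\varphi'(?p,?q)]\neq A$ is used.)

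There is essentially no hard part remaining here — all the real work is in Lemmas~\ref{LEMWI} and~\ref{LEMABC}. The only point that requires a moment's care is the reduction to templates without $p,q$, i.e.\ checking that renaming the spurious occurrences of $p,q$ in the template to fresh letters does not change the inquisitive proposition of the instance in $M_{pq}$; this holds because in $M_{pq}$ the letters $p',q'$ (and indeed any letter other than $p,q$) are false at every world, so $?p'$, $?q'$ and $?r$ all denote the same inquisitive proposition, and substitution of equivalent-in-$M_{pq}$ formulas preserves equivalence in $M_{pq}$. It is also worth noting that the same argument simultaneously shows $\to$ is not definable in the larger language $\inqp$ by any template not itself using $\to$ — but of course $\to\in\inqp$, so the interesting statement is the one for $\inqm$ as stated.
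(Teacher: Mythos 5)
Your overall strategy --- derive the theorem by applying Lemma \ref{LEMABC} to the instance $\varphi(?p,?q)$ in the model $M_{pq}$ and reading off $[\varphi(?p,?q)]_{M_{pq}}\neq A=[?p\to ?q]_{M_{pq}}$ --- is exactly the paper's, and your observation that only the $A$-clause of the lemma is needed for the theorem itself (the $B$ and $C$ clauses serving only to strengthen the induction) is correct. However, there is a genuine error in your reduction to templates without occurrences of $p,q$. You fix $p,q$ in advance and claim that renaming any stray occurrences of $p,q$ \emph{inside the template} to fresh letters $p',q'$ yields $\varphi'(?p,?q)\equiv_{M_{pq}}\varphi(?p,?q)$, on the grounds that all letters other than $p,q$ are false at every world of $M_{pq}$. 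But that is precisely why the renaming does \emph{not} preserve the semantics: a stray occurrence of $p$ in the template denotes, in $M_{pq}$, the inquisitive proposition $\{\{w_1\}\}^{\downarrow}$, whereas its replacement $p'$ denotes $\{\emptyset\}$. For a concrete failure take $\varphi(a,b)=a\land p$: then $\varphi(?p,?q)={?p}\land p$ has proposition $\{\{w_1\}\}^{\downarrow}$ in $M_{pq}$, while your renamed $\varphi'(?p,?q)={?p}\land p'$ has proposition $\{\emptyset\}$. So the equivalence you assert is false in general.

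The gap is easily repaired, and the repair is what the paper does: since refuting definability only requires that for \emph{each} template \emph{some} instance fails, you may choose the pair $p,q$ \emph{after} the template $\varphi(a,b)$ is given, taking them to be distinct atoms not occurring in $\varphi(a,b)$; then Lemma \ref{LEMABC} applies directly and no renaming of the template is needed. This is also why the paper's notion of \emph{strong} undefinability quantifies over instances $\psi',\chi'$ that differ from $\psi,\chi$ by a renaming of atoms and over models isomorphic to $M$, rather than fixing one literal instance and one literal model: the fully fixed-instance version you were aiming for is stronger than what is claimed, and your argument for it does not go through. With that one change, your proof coincides with the paper's.
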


\begin{proof}
Suppose for the sake of contradiction that there is a formula $\varphi(a,b)\in\inqm$ such that, for all formulas $\eta,\theta\in\inqm$, $\varphi(\eta,\theta)\equiv \eta\rightarrow\theta$. Let $p,q$ be two distinct atoms which do not occur in $\varphi(a,b)$. Then, as a special case of lemma \ref{LEMABC}, we obtain that $[\varphi(?p,?q)]_{M_{pq}}\neq [?p\to?q]_{M_{pq}}$, i.e. $\varphi(?p,?q) \not\equiv_{M_{pq}} \hspace{2pt} ?p\to?q$. Thus $\varphi(?p,?q) \not\equiv \hspace{2pt} ?p\to?q$, contradicting our initial assumption. 
\end{proof}




\section{Strong undefinability of $\lori$ in (extended) propositional dependence logic}\label{sec: undefinability of global disjunction}


In this section we will prove that the inquisitive disjunction\footnote{Also variously known as \emph{global}, \emph{Boolean} or \emph{intuitionistic} disjunction.} $\lori$ is strongly undefinable in propositional dependence logic $\D$, and later show how this proof can be adapted to extended propositional dependence logic $\D^+$. Undefinability simpliciter was shown for $\D$ in \cite{Yan2017}  by a more informative but quite complicated method, which does not straightforwardly extend to $\D^+$.

For any distinct propositional letters $p,q$ we consider a model $M_{pq} = (W, V_{pq})$ where:
\begin{itemize}
\item $W=\{w_1,w_2,w_3\}$
\item $[p] = \{w_1,w_2\}^\downarrow$
\item $[q] = \{w_2,w_3\}^\downarrow$
\item $[r]=\{\emptyset\}$ for each $r\neq p,q$
\end{itemize}

Differently from the case of inquisitive logic, not every $\D$ formula can be accepted as a definition of a connective; we must ensure that substitution instances are still well-formed $\D$ formulas.
Towards this goal, we call $\varphi(a,b)$ a \emph{context} if it is a $\D$ formula in which $a$ and $b$ do not occur negated nor in a dependence atom.\footnote{The terminology is from \cite{Yan2017}.}

We want to show that, for every context $\varphi(a,b)$ without occurrences of $p,q$, $\varphi(\con p,\con q)\not\equiv_{M_{pq}} \con p\lori \con q$.

\begin{lemma}\label{DEPLEMMA}
Let $\delta$ be a dependence atom without occurrences of $p,q$. Then $\delta\equiv_{M_{pq}}\top$.
\end{lemma}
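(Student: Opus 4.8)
\textbf{Proof plan for Lemma \ref{DEPLEMMA}.}
The plan is to show that any dependence atom $\delta = {\dep{\vec r}{s}}$ whose letters $\vec r, s$ all differ from $p$ and $q$ is supported by \emph{every} state $s' \subseteq W$, which is precisely the condition $\delta \equiv_{M_{pq}} \top$ (recall $\top$ abbreviates $u \otimes \neg u$, which every state supports by the empty set property together with downward closure, or directly from the semantic clause for $\otimes$). By the support clause for dependence atoms, $s' \models \dep{\vec r}{s}$ holds iff for all $w, w' \in s'$, if $w(\vec r) = w'(\vec r)$ then $w(s) = w'(s)$. So it suffices to check that any two of the three worlds $w_1, w_2, w_3$ that agree on all letters in $\vec r$ must already agree on $s$.

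The key observation is that, by the definition of $M_{pq}$, the only letters that can be true at any world are $p$ and $q$: for every $r \neq p,q$ we have $[r] = \{\emptyset\}$, which forces $r \notin V_{pq}(w_i)$ for each $i$ (otherwise $\{w_i\} \models r$). Hence for any letter $r \notin \{p,q\}$ we have $w_1(r) = w_2(r) = w_3(r) = 0$. Since $\vec r$ and $s$ consist only of such letters, \emph{all three worlds agree on every letter appearing in $\delta$}; in particular they all assign $0$ to $s$. Thus the implication ``$w(\vec r) = w'(\vec r)$ implies $w(s) = w'(s)$'' holds vacuously-in-the-conclusion (the conclusion is always true), so the atom is supported by every $s' \subseteq W$.

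The only point requiring a little care — the main (minor) obstacle — is the bookkeeping around what counts as a ``dependence atom without occurrences of $p,q$'': one must note that the statement covers both proper dependence atoms $\dep{\vec r}{s}$ with $\vec r$ nonempty and the degenerate case of a constancy atom $\con{s}$ (equivalently $?s$), but the argument above is uniform — in either case the relevant letters are all $\neq p,q$, hence constantly false across $W$, so the supporting condition is trivially met. Finally, since $\delta$ and $\top$ are both supported by every $s' \subseteq W$, we have $[\delta]_{M_{pq}} = \wp(W) = [\top]_{M_{pq}}$, i.e. $\delta \equiv_{M_{pq}} \top$, as required.
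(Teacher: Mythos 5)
Your proof is correct and matches the paper's reasoning: the paper (which states Lemma \ref{DEPLEMMA} without a detailed proof, but spells out the same idea for the $\D^+$ analogue) also argues that every letter other than $p,q$ is false at all three worlds of $M_{pq}$, so all worlds agree on the letters occurring in $\delta$ and the atom is trivially supported by every state, giving $[\delta]_{M_{pq}}=\wp(W)=[\top]_{M_{pq}}$. Your handling of the constancy-atom case and of why $\top$ is supported everywhere is a harmless elaboration of the same argument.
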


\begin{lemma}\label{LEMMACHAR}
Let $\varphi(a,b)$ be a context without occurrences of $p,q$. Then the formula $\varphi(\con p, \con q)$ is equivalent in $M_{pq}$ to one of the following:
\begin{itemize}
\item $\top$
\item  $(\con p\land \con q)\otimes (\con p\land \con q)$
\item $\con p$
\item $\con q$
\item $\con p\land \con q$
\item $\bot$
\end{itemize}
\end{lemma}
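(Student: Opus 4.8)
The plan is to prove Lemma \ref{LEMMACHAR} by induction on the structure of the context $\varphi(a,b)$, showing that at each stage the inquisitive proposition $[\varphi(\con p,\con q)]_{M_{pq}}$ lands in the finite list of six propositions displayed in the statement. First I would identify these six propositions concretely as downward-closed subsets of $\wp(W)$: writing $P:=[\con p]_{M_{pq}}$ and $Q:=[\con q]_{M_{pq}}$, one computes $P=\{\{w_1,w_2\},\{w_3\}\}^{\downarrow}$ and $Q=\{\{w_1\},\{w_2,w_3\}\}^{\downarrow}$, then $P\cap Q=\{\{w_1\},\{w_2\},\{w_3\}\}^{\downarrow}$, then $(P\cap Q)\otimes(P\cap Q)$ is the family of states of size $\le 2$, and of course $[\top]_{M_{pq}}=\wp(W)$ and $[\bot]_{M_{pq}}=\{\emptyset\}$. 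The key preliminary observation, which I would state as a sublemma or fold into the base case, is that this six-element set $\mathcal{K}:=\{[\top],(P\cap Q)\otimes(P\cap Q),P,Q,P\cap Q,[\bot]\}$ is closed under the operations $\cap$ and $\otimes$ (the semantic counterparts of $\land$ and $\otimes$), and of course contains the denotation of every atom: by Lemma \ref{DEPLEMMA} a dependence atom with no $p,q$ denotes $[\top]$, a propositional letter $r\ne p,q$ or its negation denotes $[\bot]$ by downward closure, and $a,b$ denote $P,Q$ respectively.

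The base cases are then immediate from the preceding paragraph: $\varphi(a,b)\in\{\bot\}\cup\{p_i,\neg p_i\mid p_i\ne p,q\}$ gives $[\bot]$, a dependence atom gives $[\top]$ via Lemma \ref{DEPLEMMA}, and the placeholders $a,b$ give $P$ and $Q$. For the inductive step there are only two connective cases, $\land$ and $\otimes$, since a context is a $\D$-formula and $\D$ has no other binary (or intuitionistic) connectives. If $\varphi=\theta\land\eta$ then $[\varphi(\con p,\con q)]=[\theta(\con p,\con q)]\cap[\eta(\con p,\con q)]$, both arguments lie in $\mathcal{K}$ by the induction hypothesis, and closure of $\mathcal{K}$ under $\cap$ finishes the case; similarly $[\varphi(\con p,\con q)]=[\theta(\con p,\con q)]\otimes[\eta(\con p,\con q)]$ and closure under $\otimes$ handles the $\otimes$ case. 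The crucial point that makes the induction go through is precisely that the context discipline — $a,b$ may not appear negated or inside a dependence atom — guarantees that substituting $\con p,\con q$ into a dependence atom of $\varphi$ produces an atom with no occurrence of $p,q$, so Lemma \ref{DEPLEMMA} always applies; this is the one place where the definition of ``context'' is doing real work.

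The main obstacle, and the step I would spend the most care on, is verifying that $\mathcal{K}$ is genuinely closed under $\cap$ and $\otimes$ — this is a finite but slightly fiddly check over the $6\times 6$ table (up to symmetry, using that $\cap$ is idempotent and commutative, $[\top]$ is a unit for $\cap$ and absorbing for nothing in particular, $[\bot]=\{\emptyset\}$ is absorbing for $\cap$ and a unit for $\otimes$, and $[\top]$ is absorbing for $\otimes$). The genuinely interesting entries are $P\cap Q$ (already in the list), $P\otimes P$, $P\otimes Q$, $Q\otimes Q$, $P\otimes(P\cap Q)$, and $(P\cap Q)\otimes(P\cap Q)$: one checks that each of these unions-of-states-at-most-one-from-each yields either $(P\cap Q)\otimes(P\cap Q)$ (the ``all states of size $\le 2$'' proposition) or $[\top]$, because as soon as two of $\{w_1,w_2\},\{w_3\}$ or $\{w_1\},\{w_2,w_3\}$ can be unioned one reaches a two-element state, and one never reaches $W$ itself since no single element of $P$ or $Q$ has size $\ge 2$ except $\{w_1,w_2\}$ and $\{w_2,w_3\}$, whose union is $W$ — so in fact $P\otimes Q\ni W$ and one must double-check whether $P\otimes Q=[\top]$; here downward closure of all the pieces plus $\{w_1,w_2\}\cup\{w_2,w_3\}=W$ gives $P\otimes Q=\wp(W)=[\top]$, which is fine since $[\top]\in\mathcal{K}$. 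Once this closure table is confirmed the rest of the proof is routine, and Lemma \ref{LEMMACHAR} follows; the payoff is that none of the six propositions in $\mathcal{K}$ equals $[\con p\lori\con q]_{M_{pq}}=P\cup Q$, which is the observation that will drive the undefinability theorem.
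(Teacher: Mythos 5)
Your proposal follows essentially the same route as the paper's proof: induction on the context, with the crucial observation that the six inquisitive propositions form a set closed under $\cap$ and $\otimes$, and with Lemma \ref{DEPLEMMA} disposing of dependence atoms (which, by the context discipline, contain no $p,q$ after substitution). Two local computations should be corrected, though neither is fatal: first, for a letter $r\neq p,q$, which is false at every world of $M_{pq}$, the negation $\neg r$ denotes $[\top]=\wp(W)$, not $[\bot]$ (the paper classifies this base case as $\top$); second, in the $\otimes$-table the state $W$ is reached in more cases than $P\otimes Q$ alone --- e.g.\ $P\otimes P\ni\{w_1,w_2\}\cup\{w_3\}=W$, and similarly $Q\otimes Q$ and $P\otimes(P\cap Q)$ --- so these products all equal $[\top]$ rather than the family of states of size $\le 2$, and your remark that ``one never reaches $W$'' is wrong as stated. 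Since both $[\top]$ and $[\bot]$, and both $[\top]$ and $(P\cap Q)\otimes(P\cap Q)$, belong to your set $\mathcal{K}$, the induction still closes and the lemma (and the downstream undefinability argument) goes through once these entries are fixed.
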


\begin{proof}
By induction on $\varphi$. Write $\varphi^*$ for $\varphi(\con p, \con q)$, and similarly for other formulas.
\begin{itemize}
\item Base cases.
\begin{itemize}
\item  If $\varphi(a,b)$ is $a$, then $\varphi^*$ is $\con p$, and we are done. Similarly if  $\varphi(a,b)$ is $b$.
\item  If $\varphi(a,b)$ is $r$ for some $r\neq a,b,p,q$, then $\varphi^*$ is $r$, so  $\varphi^*\equiv_{M_{pq}} \bot$.
\item  If $\varphi(a,b)$ is $\neg r$ for some $r\neq a,b,p,q$, then $\varphi^*$ is $\neg r$, so  $\varphi^*\equiv_{M_{pq}} \top$.
\item  If $\varphi(a,b)$ is a dependence atom, then $a$ and $b$ do not occur in $\varphi(a,b)$, and      $\varphi^* = \varphi(a,b)$. Thus, by lemma \ref{DEPLEMMA}, $\varphi^*\equiv_{M_{pq}}\top$.
\end{itemize}
\item Case $\varphi(a,b) = \eta(a,b) \land \theta(a,b)$. By induction hypothesis, $\eta^*$ and $\theta^*$ are equivalent in $M_{pq}$ to one out of $\top,\con p,\con q,\con p\land \con q,(\con p\land \con q)\otimes (\con p\land \con q),\bot$. We verify that the set of the inquisitive propositions of these six formulas is closed under intersection. 


Clearly no set is altered by intersection with $[\top]$, and intersecting anything with $[\bot]$ gives $[\bot]$. For the rest, observe that
\begin{itemize}
\item $[\con p]\cap[\con q] = [\con p]\cap[\con p\land \con q] = [\con q]\cap[\con p \land \con q] = [\con p \land \con q]$, where the first two equalities follow from the associativity, commutativity and idempotence of $\land$.
\item $[\con p], [\con q], [\con p\land\con q] \subset [(\con p\land \con q)\otimes (\con p\land \con q)]$, so if $\eta^*$ is either $\con p,\con q$ or $\con p\land \con q$, then $[\eta^*]\cap [(\con p\land \con q)\otimes (\con p\land \con q)]= [\eta^*]$.
\end{itemize}

\item Case $\varphi(a,b) = \eta(a,b) \otimes \theta(a,b)$. By induction hypothesis, each of $\eta^*$ and $\theta^*$ is equivalent in $M_{pq}$ to one out of $\top,\con p,\con q,\con p\land \con q,(\con p\land \con q)\otimes (\con p\land \con q),\bot$. We verify that the set of the inquisitive propositions of these six formulas is closed under the set-theoretical operator $\otimes$. Again, this is trivial if $\top$ or $\bot$ are involved.
\begin{itemize}
\item $[\con p]\otimes[\con p] = \wp(s) = [\top]$, because, first, $W= \{w_1,w_2\}\cup\{w_3\}$; secondly, $\{w_1,w_2\},\{w_3\}\in [\con p]$, so that $W\in [\con p \otimes \con p]$; and lastly, $\otimes$ preserves downward closure. Similarly for $\con q$.
\item $[\con p]\otimes[\con q] = \wp(s) = [\top]$, because $W= \{w_1,w_2\}\cup\{w_2,w_3\}$ and $\{w_1,w_2\}\in [\con p],\{w_2,w_3\}\in [\con q]$ (and using again downward closure).
\item Let $\eta^*$ be either $\con p,\con q$ or $(\con p\land \con q)\otimes (\con p\land \con q)$. Then either $\{w_1,w_2\}$ or $\{w_2,w_3\}$ is in $[\eta^*]$. Now, since both $\{w_3\}$ and $\{w_1\}$ are in $[\con p \land \con q]$, and $W=\{w_1,w_2\}\cup \{w_3\} = \{w_1\} \cup\{w_2,w_3\}$, in each of these cases $[\eta^*]\otimes[\con p \land \con q] = \wp(s) = [\top]$. For similar reasons, $[\eta^*]\otimes[(\con p \land \con q)\otimes(\con p \land \con q)] = [\top]$.
\end{itemize}
\end{itemize}
 
\end{proof}

\begin{theorem}\label{TEOINQDISJ}
The connective $\lori$ is strongly undefinable in $\D$.
\end{theorem}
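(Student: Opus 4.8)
The plan is to derive the theorem from Lemma~\ref{LEMMACHAR} by a short finite computation. Suppose, for contradiction, that $\lori$ were definable in $\D$: then there is a context $\varphi(a,b)$ such that $\varphi(\psi,\chi)\equiv\psi\lori\chi$ for all $\D$-formulas $\psi,\chi$ for which $\varphi(\psi,\chi)$ is well-formed --- and, $\varphi$ being a context, this holds in particular for $\psi=\con p$, $\chi=\con q$. Choose distinct propositional letters $p,q$ not occurring in $\varphi(a,b)$ and specialize to the model $M_{pq}$. By Lemma~\ref{LEMMACHAR}, $\varphi(\con p,\con q)$ is equivalent in $M_{pq}$ to one of the six fixed formulas $\top$, $(\con p\land\con q)\otimes(\con p\land\con q)$, $\con p$, $\con q$, $\con p\land\con q$, $\bot$. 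So it suffices to verify that none of these six is equivalent in $M_{pq}$ to $\con p\lori\con q$; this contradicts $\varphi(\con p,\con q)\equiv_{M_{pq}}\con p\lori\con q$ and finishes the proof.

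For the finite check I would first compute $[\con p\lori\con q]_{M_{pq}}$. Since $\con p$ is $?p$, and $?p$ is supported at $s$ exactly when all worlds of $s$ agree on $p$, we get $[\con p]_{M_{pq}}=\{\{w_1,w_2\},\{w_3\}\}^\downarrow$ and, symmetrically, $[\con q]_{M_{pq}}=\{\{w_1\},\{w_2,w_3\}\}^\downarrow$; hence $[\con p\lori\con q]_{M_{pq}}=[\con p]\cup[\con q]=\{\{w_1,w_2\},\{w_2,w_3\}\}^\downarrow$. The separating feature is that this set contains both $\{w_1,w_2\}$ and $\{w_2,w_3\}$ but contains neither $\{w_1,w_3\}$ nor $W$. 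Running through the list: $[\con q]$, $[\con p\land\con q]$ and $[\bot]$ all omit $\{w_1,w_2\}$, while $[\con p]$, $[\con p\land\con q]$ and $[\bot]$ all omit $\{w_2,w_3\}$; and the only two of the six whose proposition contains both $\{w_1,w_2\}$ and $\{w_2,w_3\}$, namely $[\top]=\wp(W)$ and $[(\con p\land\con q)\otimes(\con p\land\con q)]=\wp(W)\setminus\{W\}$, both contain $\{w_1,w_3\}$. So $\con p\lori\con q$ matches none of them.

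Finally I would observe that the argument in fact establishes \emph{strong} undefinability in the sense of the introduction: the failure is witnessed by the single model $M_{pq}$ --- unique up to isomorphism among the models $M_{p'q'}$ --- and by the one substitution instance $(\con p,\con q)$, with $p,q$ freely renamable so as to avoid the atoms of any given candidate context; making this remark explicit is all that is needed to upgrade ``undefinable'' to ``strongly undefinable''. I do not expect any real obstacle at this stage: the substantive content is already in Lemma~\ref{LEMMACHAR} (resting on Lemma~\ref{DEPLEMMA}), which pins the candidates down to a finite list closed under $\cap$ and the set-theoretic $\otimes$, so that what remains is just the bookkeeping above --- the only point requiring mild care being that ``definable'' for $\D$ must mean definable by a \emph{context}, which is exactly what legitimizes the substitution instances $\con p,\con q$ as bona fide $\D$-formulas.
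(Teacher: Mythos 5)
Your proposal is correct and follows essentially the same route as the paper: pick $p,q$ not occurring in the candidate context, invoke Lemma~\ref{LEMMACHAR} to reduce $\varphi(\con p,\con q)$ in $M_{pq}$ to one of the six listed formulas, and check that none of their propositions equals $[\con p\lori\con q]=\{\{w_1,w_2\},\{w_2,w_3\}\}^\downarrow$. Your explicit finite check (and the remark on renaming atoms to get strong undefinability) just spells out what the paper leaves as ``easily checked,'' and the computations are accurate for the model of Section~\ref{sec: undefinability of global disjunction}.
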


\begin{proof}
Let $\varphi(a,b)$ be a context.  Let $p,q$ be two atoms which do not occur in $\varphi(a,b)$. We show that $\varphi(\con p,\con q)\not\equiv \con p \lori\con q$. Consider the model $M_{pq}$ corresponding to $p,q$. In it, $[\con p \lori\con q] = \{\{w_1,w_2\},\{w_2,w_3\}\}^\downarrow$. By lemma \ref{LEMMACHAR}, $[\varphi(\con p,\con q)]$ is either $[\top],[\con p],[\con q],[\con p\land \con q],[(\con p\land \con q)\otimes (\con p\land \con q)],[\bot]$. It is easily checked that none of these sets is $[\con p \lori\con q]$; so, $\varphi(\con p,\con q) \not\equiv_{M_{pq}} \con p \lori\con q$; and thus $\varphi(\con p,\con q) \not\equiv \con p \lori\con q$.   
\end{proof}


We briefly show that the result above easily extends to a proof of strong undefinability of $\lori$ in $\D^+$, the language that differs from $\D$ only in that it allows dependence atoms of the form $\dep{\vec \alpha}{\beta}$, where $\vec \alpha,\beta$ are $\D$ formulas without occurrences of dependence atoms.

First of all, the analogue of lemma \ref{DEPLEMMA} 
can be proved by pointing out that any (generalized) dependence atom without occurrences of $p,q$ is of the form $\dep{\vec\alpha}{\beta}$, where $\beta$ is a formula without occurrences of $p,q$. Since in $M_{pq}$ all worlds agree on the truth value of propositional letters different from $p,q$, it can be shown by a straightforward induction that all worlds in $M_{pq}$ agree about the truth value of $\beta$. Therefore, $\dep{\vec\alpha}{\beta}$ is trivially satisfied by all substates of $W$, i.e.  $\dep{\vec\alpha}{\beta}\equiv_{M_{pq}} \top$.

Secondly, observe that, in the analogue of lemma \ref{LEMMACHAR}, the generalized dependence atoms play a role only in the base case for dependence atoms. But this case is taken care by the analogue of lemma \ref{DEPLEMMA} exactly in the same way as in the original proof. 

Thirdly, the generalized dependence atoms play no role in the rest of the proof of theorem \ref{TEOINQDISJ}. Thus we have:

\begin{theorem}\label{TEOINQDISJPLUS}
The connective $\lori$ is strongly undefinable in $\D^+$.
\end{theorem}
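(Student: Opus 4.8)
The plan is to reduce Theorem~\ref{TEOINQDISJPLUS} entirely to the machinery already developed for Theorem~\ref{TEOINQDISJ}, observing that the only difference between $\D^+$ and $\D$ is the shape of the dependence atoms, and that these atoms enter the argument only through two places: Lemma~\ref{DEPLEMMA} and the corresponding base case of Lemma~\ref{LEMMACHAR}. So the first step is to prove the $\D^+$-analogue of Lemma~\ref{DEPLEMMA}: every \emph{generalized} dependence atom $\dep{\vec\alpha}{\beta}$ without occurrences of $p,q$ satisfies $\dep{\vec\alpha}{\beta}\equiv_{M_{pq}}\top$. The key observation here is that $\beta$ is a classical ($\D$-)formula built from propositional letters other than $p,q$, negations of such, $\bot$, $\land$ and $\otimes$; since all three worlds $w_1,w_2,w_3$ of $M_{pq}$ agree on the truth value of every letter $r\neq p,q$ (each such $r$ is false everywhere, as $[r]=\{\emptyset\}$), a routine induction on $\beta$ shows that $\{w_1\}\models\beta\iff\{w_2\}\models\beta\iff\{w_3\}\models\beta$. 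Hence the right-hand side of the support clause for $\dep{\vec\alpha}{\beta}$ is vacuously satisfied for every pair $w,w'\in s$ and every $s\subseteq W$, so $[\dep{\vec\alpha}{\beta}]_{M_{pq}}=\wp(W)=[\top]_{M_{pq}}$.

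Next I would re-run the proof of Lemma~\ref{LEMMACHAR} verbatim, except that the notion of \emph{context} is now taken relative to $\D^+$ (a $\D^+$ formula in which $a,b$ occur neither negated nor inside a generalized dependence atom), and the base case ``$\varphi(a,b)$ is a dependence atom'' is replaced by ``$\varphi(a,b)$ is a generalized dependence atom''. In that case $a$ and $b$ cannot occur in $\varphi(a,b)$ (by the definition of context), so $\varphi^*=\varphi(a,b)$ is itself a generalized dependence atom without occurrences of $p,q$, and by the analogue of Lemma~\ref{DEPLEMMA} just established we get $\varphi^*\equiv_{M_{pq}}\top$, which is one of the six listed normal forms. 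Every other base case and every inductive step ($\land$ and $\otimes$) is syntactically unchanged, because the inductive steps only manipulate the six inquisitive propositions and never inspect the internal structure of atoms; the closure computations for $[\,\cdot\,]\cap[\,\cdot\,]$ and $[\,\cdot\,]\otimes[\,\cdot\,]$ carry over word for word. Thus the $\D^+$-analogue of Lemma~\ref{LEMMACHAR} holds: for every $\D^+$-context $\varphi(a,b)$ without $p,q$, the formula $\varphi(\con p,\con q)$ is $M_{pq}$-equivalent to one of $\top$, $(\con p\land\con q)\otimes(\con p\land\con q)$, $\con p$, $\con q$, $\con p\land\con q$, $\bot$.

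Finally I would conclude exactly as in the proof of Theorem~\ref{TEOINQDISJ}: given any $\D^+$-context $\varphi(a,b)$, pick distinct atoms $p,q$ not occurring in it, pass to $M_{pq}$, note that $[\con p\lori\con q]_{M_{pq}}=\{\{w_1,w_2\},\{w_2,w_3\}\}^\downarrow$, and observe by direct inspection that this set differs from each of the six inquisitive propositions listed above — so $\varphi(\con p,\con q)\not\equiv_{M_{pq}}\con p\lori\con q$, hence $\varphi(\con p,\con q)\not\equiv\con p\lori\con q$. Since $M_{pq}$ is determined up to isomorphism by the abstract pattern of $p,q$ (and the witnessing substitution instance $\con p\lori\con q$ differs from a fixed one only by renaming of atoms), this is in fact a strong undefinability result in the sense of the introduction. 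I do not expect any genuine obstacle: the whole content is the auxiliary induction showing that all worlds of $M_{pq}$ agree on every $\beta$ not mentioning $p,q$, and even that is immediate once one notes $[r]_{M_{pq}}=\{\emptyset\}$ for $r\neq p,q$ — the mild subtlety to be careful about is simply that $\beta$ may itself contain $\otimes$, so the induction must cover the $\otimes$ clause (if $\{w\}\models\beta_1\otimes\beta_2$ then, by the support clause and the empty-set property, $\{w\}\models\beta_1$ or $\{w\}\models\beta_2$, and conversely, which makes the ``agree on $\{w\}$'' property propagate through $\otimes$ as well).
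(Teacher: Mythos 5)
Your proposal is correct and follows essentially the same route as the paper: it reduces everything to the $\D^+$-analogue of Lemma~\ref{DEPLEMMA} (all worlds of $M_{pq}$ agree on any $\beta$ not mentioning $p,q$, so the generalized atom is $M_{pq}$-equivalent to $\top$), then re-runs Lemma~\ref{LEMMACHAR} and the argument of Theorem~\ref{TEOINQDISJ} unchanged. Your explicit treatment of the $\D^+$ notion of context and of the $\otimes$ clause in the auxiliary induction only spells out details the paper leaves implicit.
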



\bibliography{iilogics2}
\bibliographystyle{plain}

\end{document}